\definecolor{light-gray}{gray}{0.95} 
\definecolor{cadmiumgreen}{rgb}{0.0, 0.42, 0.24}
\newtheorem{prop}{Proposition}
\newtheorem{conj}{Conjecture}
\newtheorem*{conj*}{Conjecture}
\theoremstyle{definition}
\newcommand{\NN}{\mathbb{N}}
\newcommand{\ZZ}{\mathbb{Z}}
\newcommand{\Mobius}{M\"obius}
\newcommand{\leqtri}{\leq_{\tri}}
\newcommand{\mutri}{\mu_{\tri}}
\newcommand{\tri}{\mathcal T}
\newcommand{\mertens}{\mathcal M}
\newcommand{\leqP}{\preccurlyeq_P}
\title[M\"{o}bius function of triangular numbers]{The M\"obius function of the poset of triangular numbers under divisibility}
\author{Rohan Pandey}
\author{Harry Richman}
\date{\today}
\begin{document}


\maketitle

\begin{abstract}
    This paper analyzes the \Mobius{} ($\mu(i)$) function defined on the partially ordered set of triangular numbers ($\tri(i)$) under the divisibility relation. We make conjectures on the asymptotic behavior of the classical \Mobius{} and Mertens functions on the basis of experimental data and other proven conjectures. We first introduce the growth of partial sums of $\mutri(i)$ and analyze how the growth is different from the classical \Mobius{} function, and then analyze the relation between the partial sums of $|\mutri(i)|$, and how it is similar to the asymptotic classical \Mobius{} function. Which also happens to involve the Riemann zeta function. Then we create Hasse diagrams of the poset, this helps introduce a method to visualize the divisibility relation of triangular numbers. This also serves as a basis for the zeta and \Mobius{} matrices. Looking specifically into the poset defined by $(\NN, \leqtri)$, or triangular numbers under divisibility and applying the \Mobius{} function to it, we are able create our desired matrices. And then using Python libraries we create visualizations for further analysis, and are able to project previously mentioned patterns. Through which we are able to introduce two more novel conjectures bounding $\mutri(n)$ and the sums of $\frac{\mutri(i)}{i}$. We conclude the paper with divisibility patterns in Appendix~\ref{sec:triangular-patterns}, with proofs of the helpful and necessary propositions. 
\end{abstract}

\tableofcontents

\section{Introduction}

The classical \Mobius{} function is an important and well-studied function in multiplicative number theory.
It has connections to many theorems and unresolved conjectures.
Notably, the Riemann hypothesis is equivalent to a certain asymptotic bound on the partial sums of the \Mobius{} function.

In this paper, we introduce and study a variant on the classical \Mobius{} function.
The classical \Mobius{} function arises from the positive integers under division, which forms the structure of a partial order.
We consider the partial order on the positive integers which records divisibility among the triangular numbers.
In more detail: let $\tri(i) = \frac12 i(i+1)$ denote the $i$-th triangular number,
and let $\leqtri$ be the relation on $\NN$ defined by
\[
    i \leqtri j 
    \qquad\Leftrightarrow\qquad
    \tri(i) \text{ divides } \tri(j) .
\]
Equivalently, we have
$i \leqtri j$ if and only if $i(i+1)$ divides $j(j+1)$.
 
The partially ordered set $(\NN, \leqtri)$ has a \Mobius{} function, which we denote $\mutri$.
Based on experimental data, we make the follow conjectures.

\begin{conj}[Growth of partial sums of $\mutri$]
\label{conj:mobius-sum}
There is a positive constant $C$ such that
\begin{equation*}
    \sum_{i=1}^n \mutri(i) \leq - C n \quad\text{for all sufficiently large } n.
\end{equation*}
\end{conj}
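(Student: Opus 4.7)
My plan is to split the partial sum into an ``atomic'' contribution, which is linear and negative in $n$, plus a residual contribution that I expect to show is asymptotically negligible.

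First, I would identify the atoms of the poset. Call an index $i > 1$ an \emph{atom} if $j = 1$ is the only $j \in [1, i)$ with $\tri(j) \mid \tri(i)$. The M\"obius recursion
\[
\mutri(i) = -\sum_{j <_{\tri} i} \mutri(j) \qquad (i > 1)
\]
forces $\mutri(i) = -1$ at every atom, since the only strict predecessor in the poset is $1$. Writing $A$ for the set of atoms, this gives the decomposition
\[
\sum_{i=1}^{n} \mutri(i) \;=\; 1 \;-\; |A \cap [1, n]| \;+\; R(n), \qquad R(n) := \sum_{\substack{i \leq n \\ i \notin A \cup \{1\}}} \mutri(i).
\]
It suffices to show $|A \cap [1, n]| \geq c_1 n$ for some $c_1 > 0$ and $R(n) \leq (c_1 - \varepsilon) n$ for some $\varepsilon > 0$; the conjecture then holds with any $C \in (0, c_1 - \varepsilon)$.

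Next, I would bound the density of $A$ from below by a sieve. For each $j \geq 2$, the set $P(j) := \{i \in \NN : \tri(j) \mid \tri(i)\}$ is a union of residue classes modulo $j(j+1)$, with density $\rho_j$ factoring as a product over the prime powers dividing $j(j+1)$ by the Chinese remainder theorem. Since $A^c \cap \NN_{>1} \subseteq \bigcup_{j \geq 2} P(j)$, a positive lower bound on the density of $A$ reduces to a sieve estimate. A convenient starting point is the class $i \equiv 1 \pmod 3$: since $3 \nmid i(i+1)$ in this class, one has $i \notin P(j)$ for every $j \not\equiv 1 \pmod 3$ simultaneously, eliminating most sieve factors at one stroke. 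Sieving out the remaining $P(j)$ for $j \equiv 1 \pmod 3$, controlled by the divisor-type bound $\rho_j \ll j^{-2 + o(1)}$, should produce $d(A) \geq c_1 > 0$.

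The main obstacle will be bounding the residual $R(n)$. At non-atom indices $\mutri(i)$ need not be sign-constant, so termwise bounds are inadequate. One natural angle is the global identity
\[
\sum_{j \leq n} \mutri(j) \cdot \bigl|\{ i \leq n : j \leqtri i \}\bigr| = 1,
\]
obtained by summing $\sum_{j \leqtri i} \mutri(j) = [i=1]$ over $i \leq n$, which relates $R(n)$ to partial-sum data on sub-posets; a second angle is to stratify non-atom indices by the smallest $j > 1$ below them and recurse through the M\"obius recursion. The fundamental difficulty is arithmetic: the poset $(\NN, \leqtri)$ carries no multiplicative structure, so Dirichlet series and Euler-product machinery are unavailable, and controlling $\mutri$ at indices $i$ for which $\tri(i)$ has several simultaneous triangular divisors appears to require new input on the joint distribution of the divisibility conditions $j_k(j_k+1) \mid i(i+1)$. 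Empirically, $-\frac{1}{n}\sum_{i=1}^{n} \mutri(i)$ appears to approach a limit near $1/3$, matching the density of $\{i \equiv 1 \pmod 3\}$ and strongly supporting the atomic-decomposition strategy as the right route.
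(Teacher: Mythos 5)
You should first note that the statement you are proving is one of the paper's \emph{conjectures}: the paper offers only numerical evidence (Figures~\ref{fig:mutri-partial-sums} and \ref{fig:mobius-tri-sums}), no proof, so the only question is whether your sketch actually closes it — and it does not. The parts you do carry out are sound: every atom $i$ (an element covering only $1$) satisfies $\mutri(i)=-1$ by the recursion, your decomposition of the partial sum is valid, $P(j)$ is indeed a union of residue classes modulo $j(j+1)$ with density $2^{\omega(j(j+1))}/(j(j+1))$, and restricting to $i\equiv 1 \pmod 3$ does eliminate every $j\not\equiv 1\pmod 3$ at once. With a numerical check that $\sum_{j\geq 4,\ j\equiv 1 (3)} \rho_j < 1$ (the first term $\rho_4=1/5$ is not small, but the series converges and the total appears to be safely below $1$), a positive lower density of atoms is a provable, unconditional statement. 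Two small slips: the final constant should be any $C\in(0,\varepsilon)$, not $(0,c_1-\varepsilon)$, since the bound you get is $1-\varepsilon n$; and atoms are neither contained in nor contain the class $i\equiv 1\pmod 3$ (e.g.\ $2$ is an atom, $19\equiv 1\pmod 3$ is not an atom), so the "matching density $1/3$" reading is only heuristic.

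The genuine gap is exactly where you say it is, and it is the whole conjecture: you have no bound on $R(n)$, the contribution of non-atoms. Neither of your two "angles" is developed into an estimate, and the global identity $\sum_{j\leq n}\mutri(j)\,|\{i\leq n: j\leqtri i\}|=1$ by itself constrains almost nothing, because the weights $|\{i\leq n: j\leqtri i\}|$ vary wildly with $j$. The non-atom values are precisely the hard part: the paper's Table~\ref{table:mobius-tri-large} and its Conjecture on large values indicate $|\mutri(n)|$ is (conjecturally) unbounded and switches sign erratically, so nothing in your argument rules out the non-atom sum being $+c_1 n$ or larger and cancelling the atomic term entirely. One more caution about leaning on the numerics: by partial summation, an eventual bound $\sum_{i\leq n}\mutri(i)\leq -Cn$ forces $\sum_{i\leq n}\mutri(i)/i\to-\infty$, whereas the paper's own data (Conjecture~\ref{conj:asymptote}) has these sums levelling off near $-0.24$; this tension means the claimed empirical mean near $-1/3$ should be rechecked rather than treated as strong support. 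As it stands you have a plausible research program whose first half (positive atom density, hence a positive proportion of $i$ with $\mutri(i)=-1$) is within reach, but the conjecture itself remains unproved by this proposal.
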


\begin{conj}[Partial sums of $|\mutri|$]
\label{conj:mobius-abs-sum}
As $n \to \infty$,
\begin{equation*}
    \sum_{i=1}^n |\mutri(i)| = \frac12 n + o(n).
\end{equation*}
\end{conj}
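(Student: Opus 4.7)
The plan is to identify the set $S = \{i \in \NN : \mutri(i) \neq 0\}$ explicitly and to show its natural density equals $1/2$. I would first try to establish as a lemma that $|\mutri(i)| \leq 1$ for every $i$, which is consistent with all the small-data values one observes by direct computation; with this lemma in hand the conjecture reduces to a density statement about $S$. The lemma itself would follow from showing that the order complex of the open interval $(1,i)$ is always homotopy-equivalent to either a point or to a single sphere, perhaps by induction on $i$ and a careful exploitation of the multiplicative structure of $i(i+1)$.

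Next I would partition $S$ according to the combinatorial structure of the principal order ideal $[1,i] = \{j : T(j) \mid T(i)\}$. The cleanest sub-case is $A = \{i : [1,i] = \{1,i\}\}$, for which $\mutri(i) = -1$; using the divisibility criteria in Appendix~\ref{sec:triangular-patterns}, membership in $A$ is a sieve condition on $i$ modulo finitely many small primes (for instance, the condition $T(2)=3 \nmid T(i)$ gives $i \equiv 1 \pmod{3}$), and its density can be computed by a standard Eratosthenes--Legendre sieve. For the remaining $i \notin A$ with $\mutri(i) \neq 0$, one would stratify further by the ``local shape'' of $[1,i]$ near its bottom and top, producing a countable family of sieve sets indexed by finite poset types; the density of each stratum is controlled by congruences of $i$ modulo a product of primes.

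The hard part will be showing that these densities sum to exactly $1/2$, rather than some other positive constant. Unlike the classical formula $6/\pi^2 = \prod_p (1 - p^{-2})$ for the density of squarefree integers, the value $1/2$ hints either at a rapidly collapsing Euler product dominated by a single prime, or at a bijective pairing between $S$ and its complement. A natural analytic approach is to study the Dirichlet series $D(s) = \sum_{i \geq 1} |\mutri(i)| \, i^{-s}$ and prove that $D(s)$ has a simple pole at $s = 1$ with residue $\tfrac{1}{2}$, via an Euler-product factorization indexed by primes dividing $i(i+1)$. The principal obstacle throughout is that the shape of $[1,i]$ depends on \emph{all} the prime factors of $i(i+1)$, not only the small ones, so any local-global or sieve argument must carefully bound tail contributions, likely requiring equidistribution estimates for $i(i+1)$ modulo products of primes up to some power of $n$.
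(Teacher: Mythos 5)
First, a point of comparison: the paper does not prove this statement at all --- it is stated as Conjecture~\ref{conj:mobius-abs-sum} and supported only by plots of the partial sums (Figure~\ref{fig:abs-mobius-sum}), so there is no proof of record for your plan to be measured against; any correct argument here would be new. That said, your proposal has a concrete fatal flaw at its first step. The lemma you propose, that $|\mutri(i)| \leq 1$ for every $i$, is false, and the paper's own data already refutes it: Table~\ref{table:mobius-tri-large} records that $|\mutri(44)| \geq 2$, $|\mutri(272)| \geq 3$, and so on, and the paper explicitly conjectures that $|\mutri(n)|$ attains arbitrarily large values. This is precisely one of the ways $(\NN, \leqtri)$ differs from the classical divisor poset: the interval $\{j : j \leqtri i\}$ is not a product of chains, so there is no reason for the order complex of the open interval to be a point or a single sphere, and empirically it is not.

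With the lemma gone, the reduction of the conjecture to a natural-density statement about $S = \{i : \mutri(i) \neq 0\}$ collapses: $\sum_{i=1}^n |\mutri(i)|$ is not the counting function of $S$, and a density-$\frac12$ result for $S$ would neither imply nor be implied by the asymptotic $\frac12 n + o(n)$ unless you also control the contribution of the terms with $|\mutri(i)| \geq 2$, which the data suggest are rare but of growing magnitude --- exactly the kind of tail you cannot wave away. The sieve portion is also shakier than you suggest: whether $\tri(m) \mid \tri(i)$ for a fixed small $m$ is a congruence condition on $i$ (e.g.\ $3 \nmid \tri(i)$ iff $i \equiv 1 \pmod 3$, which is fine), but the shape of the whole lower interval depends on divisors $\tri(m)$ with $m$ up to roughly $i$, i.e.\ on the joint factorization of $i(i+1)$ over all primes, so the ``countable family of sieve sets indexed by finite poset types'' is not a finite-modulus sieve and needs genuinely new input to close. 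The same obstruction blocks the proposed Euler product for $D(s) = \sum_i |\mutri(i)| i^{-s}$: $|\mutri(i)|$ has no evident multiplicative structure in $i$. In short, the analytic framing (find the density/residue and show it is $\frac12$) is a reasonable research direction, but as written the argument rests on a false lemma and an unjustified local-to-global step, so it does not establish the conjecture.
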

The little-$o$ asymptotic notation here means that $\lim_{n \to \infty} \frac{1}{n} \sum_{i=1}^n |\mutri(i)| = \frac12$.

Conjecture~\ref{conj:mobius-sum} is notably different from the corresponding behavior of the classical \Mobius{} function $\mu$, for which $\sum_{i=1}^n \mu(i)$ changes sign infinitely often,
and it is conjectured that $\sum_{i=1}^n \mu(i) = O(n^{1/2 + \epsilon})$ for any $\epsilon > 0$.
Regarding Conjecture~\ref{conj:mobius-abs-sum}, the corresponding asymptotic for the classical \Mobius{} function $\mu$ is
\[
    \sum_{i=1}^n |\mu(i)| = \frac{1}{\zeta(2)}n + o(n) ,
\]
where the leading constant is $1/\zeta(2) = {6}/{\pi^2} \approx 0.608$.

\begin{figure}[h]
    \centering
    \includegraphics[width = 0.75\textwidth]{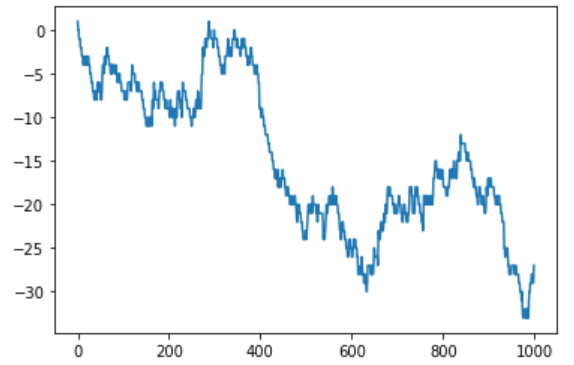}
    \caption{Partial sums of $\mutri$ from 1 to 1000.}
    \label{fig:mutri-partial-sums}
\end{figure}

\begin{conj}
    For any integer $M \geq 0$, there exists $n$ such that $\mutri(n) \geq M$.
\end{conj}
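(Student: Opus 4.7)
The plan is to construct, for any target $M$, an integer $n$ with $\mutri(n) \geq M$. The approach exploits the simplest poset configuration in which the Möbius function of a bounded finite poset is large: a poset with minimum $\hat 0$, maximum $\hat 1$, and a middle antichain of $k$ elements, each covering $\hat 0$ and covered by $\hat 1$; there a direct computation gives $\mu(\hat 0, \hat 1) = k - 1$. Translating into our setting, the defining recursion
\[
    \mutri(n) = -\sum_{\substack{i \leqtri n \\ i \neq n}} \mutri(i) \qquad (n > 1)
\]
shows that if we can locate $n$ whose principal triangular ideal $[1,n]_\tri := \{i : i(i+1) \mid n(n+1)\}$ consists of $1$, of $n$ itself, and of $k$ mutually incomparable elements each having only $1$ as a proper triangular predecessor, then we immediately obtain $\mutri(n) = k-1$.

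Call $m \geq 2$ \emph{triangular-primitive} if $[1,m]_\tri = \{1, m\}$, i.e., no $j$ with $2 \leq j < m$ satisfies $j(j+1) \mid m(m+1)$; for any such $m$ the recursion gives $\mutri(m) = -1$. The first step is to exhibit infinitely many triangular-primitive integers, using the divisibility patterns collected in Appendix~\ref{sec:triangular-patterns}. For instance, values of $m$ for which $m(m+1)$ admits a sparse factorization (such as many $m$ where $m$ or $m+1$ is prime) should be easy to certify as primitive, so that arbitrarily large antichains of such $m$ are available.

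The second and central step is, given a prescribed antichain $\{a_1, \ldots, a_k\}$ of triangular-primitive integers, to construct $n$ with $[1,n]_\tri = \{1, a_1, \ldots, a_k, n\}$. A natural candidate is to choose $n(n+1)$ to be a carefully controlled common multiple of the $a_j(a_j+1)$; one must then verify that no extraneous $j$ slips below $n$ in the triangular order. Once such $n$ is produced, the conclusion is automatic: $\mutri(n) = -(1 + k\cdot(-1)) = k - 1$, which grows without bound in $k$.

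The main obstacle will be this second step: controlling the triangular ideal of $n$ from below---that is, ensuring no unintended intermediate $j$ satisfies $j(j+1) \mid n(n+1)$---is a delicate Diophantine problem, intrinsic to the non-multiplicative nature of $\leqtri$. A looser version of the argument might still succeed: allow some additional intermediate divisors and absorb their Möbius contributions as an error term, yielding $\mutri(n) \geq (k-1) - (\text{error})$, so that unboundedness follows as long as the error can be made $o(k)$. Either variant reduces the conjecture to a concrete existence statement about simultaneous triangular divisibility, amenable to the techniques developed in the appendix.
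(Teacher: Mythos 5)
The statement you are addressing is a conjecture in the paper, not a theorem: the authors support it only with numerical data (Table~\ref{table:mobius-tri-large}), so there is no proof of record to compare against, and your proposal does not close the gap either. Your first ingredients are fine as far as they go: the recursion $\mutri(n) = -\sum_{i \leqtri n,\, i \neq n} \mutri(i)$ is the paper's definition, the computation $\mu(\hat 0, \hat 1) = k-1$ for a bounded poset whose interior is a $k$-element antichain of atoms is correct, and ``triangular-primitive'' elements (those $m$ with $3 \nmid \tri(m)$ and no larger triangular divisor below) do exist in abundance. Common upper bounds also exist: by CRT one can always solve $a_j(a_j+1) \mid n(n+1)$ simultaneously by distributing the prime-power factors of the lcm between $n$ and $n+1$.

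The genuine gap is exactly the step you flag as ``the main obstacle,'' and it is not a technicality --- it is the entire content of the conjecture. An $n$ produced by the CRT construction is enormous and $n(n+1)$ is highly composite, so the interval $[1,n]_{\tri}$ will generically contain many unintended elements $j$ with $j(j+1) \mid n(n+1)$; you give no mechanism for excluding them, and no reason to believe an $n$ with the prescribed ideal $\{1, a_1, \ldots, a_k, n\}$ exists for even one value $k \geq 9$ (the data in the paper only reach magnitude $8$). The fallback of ``absorbing'' the extraneous elements as an error term is unsupported and close to circular: the Möbius values of the intermediate elements are themselves unbounded in magnitude and of uncontrolled sign --- that unboundedness is precisely what is being conjectured --- so there is no a priori bound making the error $o(k)$. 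As written, the proposal is a plausible program that reduces the conjecture to an equally open Diophantine existence statement, not a proof; to make progress you would need either an explicit family of $n$ whose full triangular ideal you can determine, or an inductive/averaging argument controlling $\sum_{i \leqtri n,\, i \neq n} \mutri(i)$ without knowing the ideal exactly.
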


These conjectures are made on the basis of experimental data. We write code in Python to compute the \Mobius{} function $\mutri$ of the poset $(\NN, \leqtri)$, and examine plots of the relevant \Mobius{} function sums. 
The Python code is included in an appendix.
The data on the \Mobius{} function $\mutri$ and its partial sums were submitted to OEIS as entries A350682 and A351167~\cite{OEIS-mobius, OEIS-mobius-sums}.

\begin{figure}[h] 
    \centering
    \includegraphics{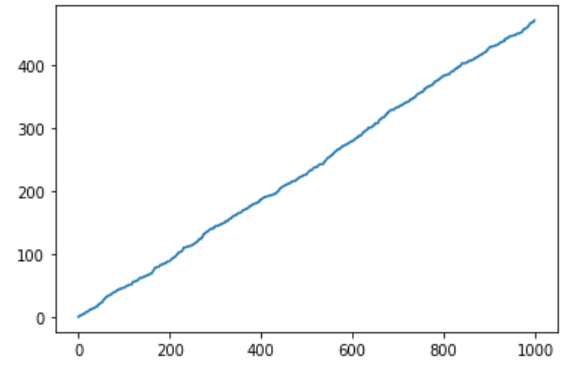}
    \caption{Partial sums of $|\mutri |$ from 1 to 1000.}
    \label{fig:mutri-abs-partial-sums}
\end{figure}

\subsection*{Acknowledgements}

Figures were created in Python~\cite{python} using Matplotlib~\cite{matplotlib} and Plotly~\cite{plotly}.

\section{Background}

Further information is included in this section to clarify terms and diagrams and other information. 
For further background on number theory, see Burton~\cite{Burton}.
For further background on the combinatorics of posets, see~\cite{Bona}.

\subsection{The classical \Mobius{} and Mertens functions}


The classical \Mobius{} function \cite{mobius}is defined, for a positive integer $n$, by the following rules:
\[
\mu(n) = \begin{cases}
    1 &\text{if } n = 1,\\
    (-1)^r &\text{if $n = p_1 p_2 \cdots p_r$, where $p_i$ are distinct primes},\\
    0 &\text{if } p^2 \,|\, n ~ \text{ for some prime }p.
\end{cases}
\]
The behavior of the \Mobius{} function is fundamentally linked to the structure of prime numbers and prime factorization.
In broad terms, the \Mobius{} function captures the multiplicative structure of the integer $n$. 
Studying the \Mobius{} function under an additive perspective, i.e. what happens when $n \to \infty$, reveals the interplay between multiplication and addition on the positive integers.

The Mertens function $\mertens(n)$ is defined by taking partial sums of the \Mobius{} function,
\[
    \mertens(n) = \sum_{k = 1}^n \mu(k).
\]
Many properties of this function were studied by Mertens \cite{mertens}.
Significantly, the Riemann hypothesis is equivalent to the asymptotic bound 
\[
    \mertens(n) = O(n^{1/2 + \epsilon}) \qquad\text{for any } \epsilon > 0.
\]
Whether this bounds holds is currently open, and is a subject of active research.
Kotnik and van de Lune
\cite{kotnik-vandelune}
investigate the asymptotics of the Mertens function by numerical experiment.

\subsection{Posets and Hasse diagrams}
\label{sec:partial-order}

The classical definition of the \Mobius{} function depends on the prime factorization of a positive integer, which in turn depends on the relation of integer divisibility.
Integer divisibility defines a {\em partial order} relation on the positive integers.

The classical \Mobius{} function is uniquely characterized by the following equations, coming from the integer divisibility relation.

\begin{enumerate}[(i)]
    \item[(M.1)] For $n = 1$, we have $\mu(1) = 1$.
    
    \item[(M.2)] For any $n \geq 2$, we have $\sum_{d \,|\, n} \mu(d) = 0$.
\end{enumerate}
These relations can be adapted to form the definition of the ``\Mobius{} function'' for an arbitrary partially ordered set.






Before discussing this generalized \Mobius{} function, let us first recall some definitions.
A partially ordered set, also known as a {\em poset}, is a tool for ordering combinatorial objects. 
For more background, see Bona~\cite[Chapter 16]{Bona}. 
A poset $(S, \preccurlyeq)$ consists of an underlying set $S$ and a binary relation ``$\preccurlyeq$" on $S$ satisfying the following axioms: 
\begin{itemize}
    \item (Reflexivity)
    $x \preccurlyeq x$ for all $x \in S$;
    \item (Antisymmetry)
    if $x \preccurlyeq y$ and $y \preccurlyeq x$, then $x = y$;
    \item (Transitivity)
    if $x \preccurlyeq y$ and $y \preccurlyeq z$, then $x \preccurlyeq z$.
\end{itemize}
If these hold, then we call 
$\preccurlyeq$ a {\em partial order relation}.


The {\em Hasse diagram} of a poset is a nice way to represent a poset graphically. 
It is a directed graph whose vertices are the elements of the poset, and whose edges correspond to {covering relations} in the poset.
A {\em covering relation} is a pair $(x,y)$ of distinct elements such that $x \preccurlyeq y$, and there is no element $z \not\in \{x, y\}$ such that $x \preccurlyeq z \preccurlyeq y$.
Rather than indicating the directions of edges with arrows, it is typical to display Hasse diagrams so that all edges implicitly point ``upwards,'' i.e. if $x \preccurlyeq y$ is a covering relation, then we draw the $x$-vertex lower and the $y$-vertex higher in the diagram.

In Figure~\ref{fig:hasse-divisor} is an example of a Hasse diagram of the integers $\{1, 2, \ldots, 20\}$ under the divisibility relation.
With the divisibility relation, the covering relations are those of the form $n$ divides $pn$, where $p$ is a prime number.

\begin{figure}[h]\label{Integer}
\centering
\includegraphics[width= 0.75\textwidth]{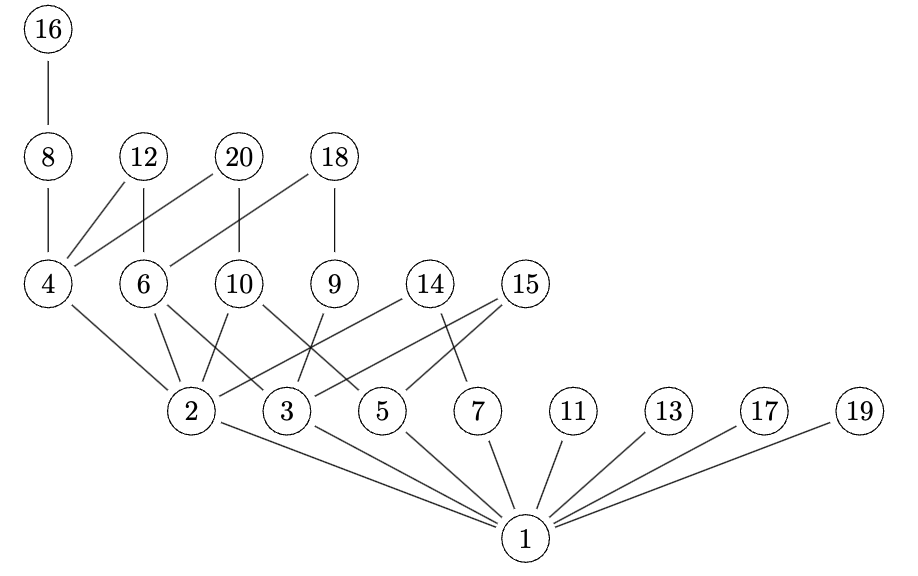}
\caption{Hasse Diagram for Integer Divisibility}
\label{fig:hasse-divisor}
\end{figure}

\subsection{Generalized \Mobius{} functions}\label{sec:gen-mobius}

We now return to the generalized \Mobius{} functions on a poset.
For more details on the \Mobius{} function, see Bona~\cite[Chapter 16.2]{Bona}. 
In order to define the \Mobius{} function for $(S, \preccurlyeq)$ we need one additional technical definition:
we say a poset $(S, \preccurlyeq)$ is {\em locally finite} if for any $x, y \in S$, the set $\{z : x \preccurlyeq z \preccurlyeq y\}$ of elements between $x$ and $y$ is finite.

The {\em \Mobius{} function} of a locally finite poset $(S, \preccurlyeq)$ is defined as follows:
$\mu_S : S \times S \to \ZZ$ satisfies
\begin{enumerate}
  \item[(GM.0)] 
    $\mu_S(x, y) = 0$ if $x \not\preccurlyeq y$;
  \item[(GM.1)] 
    $\mu_S(x, x) = 1$ for all $x \in S$;
  \item[(GM.2)] 
    if $x \preccurlyeq y$ and $x \neq y$, then $\displaystyle \sum_{z :\, x \preccurlyeq z   \preccurlyeq y} \mu_s(x, z) = 0$.
\end{enumerate}


Now suppose there is an element $1 \in S$, such that $1 \preccurlyeq x$ for all $x \in S$.
The \Mobius{} function of $(S, \preccurlyeq)$ is defined by $\mu_S(1, 1) = 1$ and
\[
    \sum_{y \preccurlyeq x} \mu_P(1, y) = 0 \qquad\text{if } x \neq 1.
\]

\subsection{Matrix computation: Zeta matrix and \Mobius{} matrix}\label{sec:mobius-matrix}

We now describe how the generalized \Mobius{} function, defined in the previous section, has an equivalent description in terms of matrices.
This matrix version has the advantage of being straightforward to implement in a programming language, using a well-supported linear algebra library.
For the rest of the paper, we assume for convenience that our poset is defined on the underlying set $S = \NN$.

The {\em zeta matrix} of a poset $P = (\NN, \leqP)$ on the natural numbers is the $\{0,1\}$-valued matrix whose entries are
\[
    Z_{i,j} = \begin{cases}
    1 &\text{if } j \leqP i, \\
    0 &\text{otherwise.}
    \end{cases}
\]

Examples of these matrices using the triangular numbers and integers (for comparison) are shown in Section~\ref{sec:tri-matrices}. 
Creating the zeta matrix for a poset
is equivalent to figuring out the poset relation between all pairs of elements in the poset;
to find a submatrix of the zeta matrix, we just need to figure out the poset relation between pairs of corresponding elements. 
The $i$-th row of the zeta matrix records which elements are larger than $i$ in the poset; 
the $j$-th column of the zeta matrix records which elements are smaller than $j$ in the poset.

We may compute the \Mobius{} function of a poset by finding the matrix inverse of the zeta matrix.
Namely, if $P$ is a locally finite poset, let 
\begin{equation}
    M = Z^{-1}
\end{equation}
where $Z$ denotes the zeta matrix of $P$ from above. 
Then the entries of the matrix $M$ are exactly the \Mobius{} function values:
\begin{equation}\label{eq:mobius-matrix}
    M_{i, j} = \mu_{P} (j, i).
\end{equation}
This is because the properties defining the \Mobius{} function, (GM.0 - GM.2) above, are equivalent to the matrix relation
\[
    [\mu_P(i, j)]_{i, j \in \NN} \, Z = I,
\]
where $I$ denotes the infinite $\NN \times \NN$ identity matrix.

Note: In this paper we focus on studying the first row of the \Mobius{} matrix. 

\section{Poset of triangular numbers}

In this section we define the poset which is the main focus of this paper.
As before, let $\tri(n) = \frac12 n(n + 1)$ denote the $n$-th triangular number.
By abuse of notation, let $\tri$ also denote the set of {\em triangular numbers}, i.e.,
\[
    \tri = \left\{ \textstyle\frac12 n (n + 1) : n = 1, 2, \ldots \right\}
    = \{1, 3, 6, 10, \ldots \}.
\]
Consider the poset $(\NN, \leqtri)$ define by $i \leq j$ if and only if $\tri(i)$ divides $\tri(j)$.

\subsection{Hasse diagram}

Here we show the Hasse diagram of the first $20$ elements of $(\NN, \leqtri)$,
which shows the divisibility relations among the first $20$ triangular numbers,
in Figure~\ref{fig:hasse-triangle}.
Recall that a line is drawn between two numbers if they are ``minimally related'' to one other. 
For example, $\tri(4)$ divides $\tri(19)$, so there is a line from $4$ to $19$. 

\begin{figure}[h!]
\centering
\includegraphics[width= 0.75\textwidth]{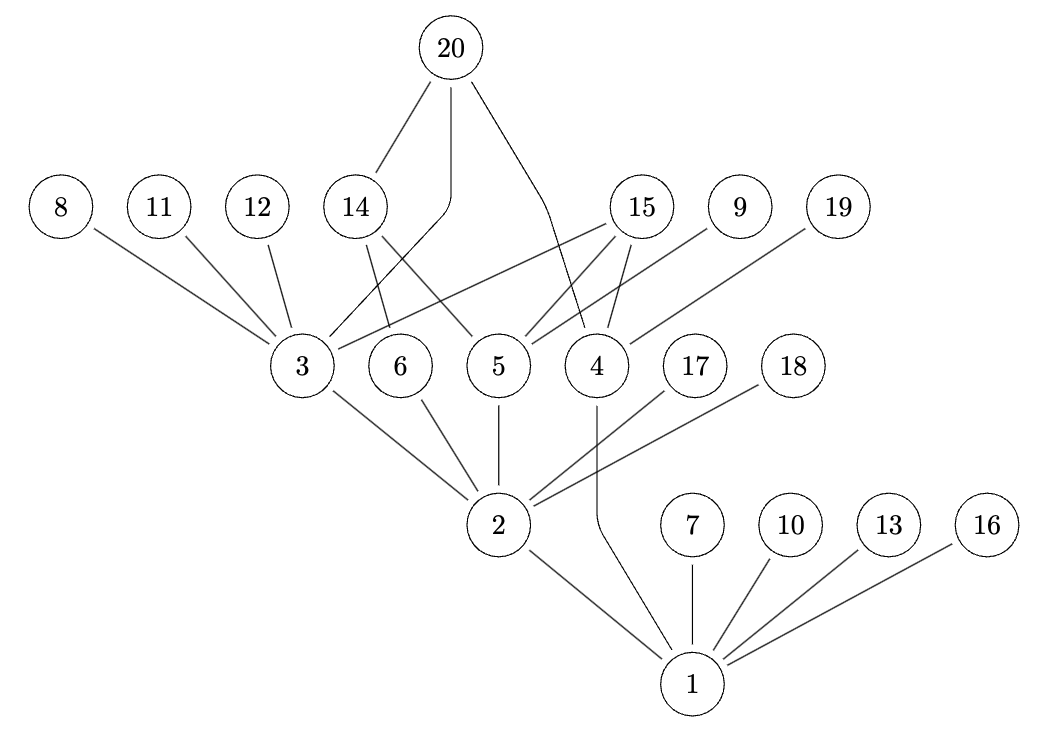}
\caption{Hasse diagram for $\leqtri$, encoding triangular number divisibility.}
\label{fig:hasse-triangle}
\end{figure}

It's important to note that in the Hasse diagram, relations implied by transitivity are not shown.
For example, the relations $5 \leqtri 14$ and $14 \leqtri 20$ are shown by edges in the Hasse diagram, since $\tri(5)$ divides $\tri(14)$ and $\tri(14)$ divides $\tri(20)$. 
But even though $5 \leqtri 20$, we don't have this edge in the Hasse diagram because this relation is already implied by 
the upwards-path of edges from $5$ to $20$.


From the Hasse diagram, we can visibly see that the poset $(\NN, \leqtri)$ is more disordered than the usual divisibility poset, shown earlier in Figure~\ref{fig:hasse-divisor}.
We note some patterns in Section~\ref{sec:triangular-patterns}.

\subsection{Triangular numbers: zeta matrix and \Mobius{} matrix}
\label{sec:tri-matrices}

Let $\mutri$ denote \Mobius{} function for the triangular numbers under the divisibility relation, in the sense defined in Section~\ref{sec:gen-mobius}.
Namely, $\mutri: \NN \to \ZZ$ is the unique function that satisfies
\begin{equation}
    \mutri(1, 1) = 1,
    \qquad 
    \mutri(n) = -\sum_{\substack{d \leqtri n \\[0.2em] d \neq n}} \mutri(1, d) \quad\text{for all } n \geq 2.
\end{equation}

As mentioned earlier in Section~\ref{sec:mobius-matrix}, the values of the \Mobius{} function $\mutri$ can be computed by inverting the zeta matrix.
Each row of the zeta matrix records which elements are greater than a given element, in the poset relation, and each column records which elements are smaller.
For every relation $i \leqtri j$, a $1$ is inserted in the corresponding $i$-th row and $j$-th column of the zeta matrix. 

The following shows the initial part of the zeta matrix:
\begin{equation}\label{eq:zeta-10}
    Z = {\color{gray}\begin{matrix}
        1 \\3 \\6 \\10 \\15 \\ 
        21 \\ 28 \\ 36 \\ 45 \\ 55
    \end{matrix}}
    \begin{bmatrix}
    1 \\
    1 & 1 \\
    1 & 1 & 1 \\
    1 & 0 & 0 & 1 \\
    1 & 1 & 0 & 0 & 1 \\
    1 & 1 & 0 & 0 & 0 & 1 \\
    1 & 0 & 0 & 0 & 0 & 0 & 1 \\
    1 & 1 & 1 & 0 & 0 & 0 & 0 & 1 \\
    1 & 1 & 0 & 0 & 1 & 0 & 0 & 0 & 1 \\
    1 & 0 & 0 & 0 & 0 & 0 & 0 & 0 & 0 & 1
    \end{bmatrix}
\end{equation}

Note that the first column 
is filled with $1$'s for every row. 
A larger zeta matrix for the partial order $(\NN, \leqtri)$, restricted to elements $\{1, \ldots, 20\}$, is shown in Figure~\ref{fig:zeta-20}.


The following shows the initial part of the \Mobius{} matrix, for the poset $(\NN, \leqtri)$:
\begin{equation}
    M = {\color{gray}\begin{matrix}
        1 \\3 \\6 \\10 \\15 \\ 
        21 \\ 28 \\ 36 \\ 45 \\ 55
    \end{matrix}}
    \begin{bmatrix}
    1 \\
    -1 & 1 \\
    0 & -1 & 1 \\
    -1 & 0 & 0 & 1 \\
    0 & -1 & 0 & 0 & 1 \\
    0 & -1 & 0 & 0 & 0 & 1 \\
    -1 & 0 & 0 & 0 & 0 & 0 & 1 \\
    0 & 0 & -1 & 0 & 0 & 0 & 0 & 1 \\
    0 & 0 & 0 & 0 & -1 & 0 & 0 & 0 & 1 \\
    -1 & 0 & 0 & 0 & 0 & 0 & 0 & 0 & 0 & 1
    \end{bmatrix}
\end{equation}
This \Mobius{} matrix $M$ is the inverse of the zeta matrix $Z$ in \eqref{eq:zeta-10}.


\section{Results: data on \Mobius{} values}

In this section, we report some empirical observations concerning the values of the \Mobius{} function $\mutri$.
The values are available on the Online Encyclopedia of Integer Sequences (OEIS) as sequence A350682~\cite{OEIS-mobius}.

\subsection{\Mobius{} values with \texorpdfstring{$m=1$}{m = 1}}

In Figure~\ref{fig:mobius-300}, we show the \Mobius{} values of the partial order $(\NN, \leqtri)$.
The values are highly erratic, rapidly switching between positive and negative values.

\begin{figure}[h!]
    \centering
    \includegraphics[width=4in]{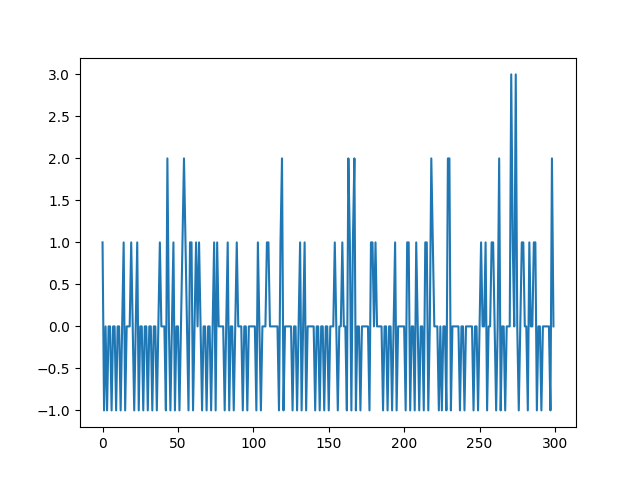}
    \vspace{-0.5cm}
    \caption{The first 300 \Mobius{} values $\mutri(n)$.}
    \label{fig:mobius-300}
\end{figure}

\subsection{Data: Partial sums of \Mobius{} values}

In this section we show data on the partial sums of the \Mobius{} values $\mutri(n)$
\[
    \sum_{i = 1}^n \mutri(i).
\]
This sequence of partial sums is available at the OEIS entry A351167~\cite{OEIS-mobius-sums}.
Figure~\ref{fig:mobius-tri-sums} shows a graph of these partial sums, for up to $n = 10,000$.

\begin{figure}[h]
    \centering
    \includegraphics[scale=0.7]{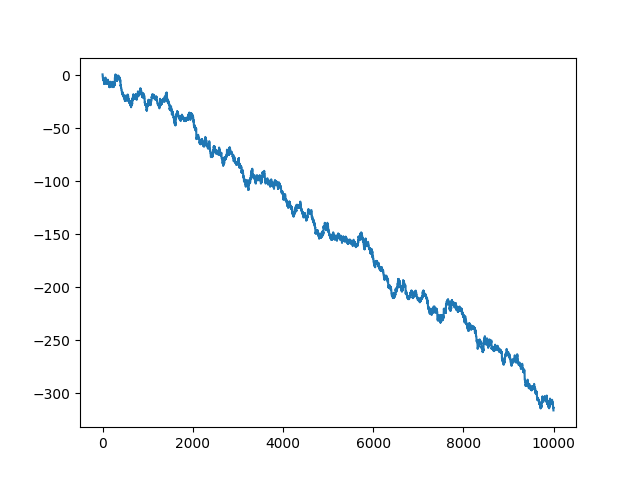}
    \caption{Partial sums of \Mobius{} function values.}
    \label{fig:mobius-tri-sums}
\end{figure}

Unlike Figure~\ref{fig:mobius-300}, Figure~\ref{fig:mobius-tri-sums} shows a clear downward trend.
This leads us to make the following conjecture.

\begin{conj*}[Growth of partial sums of $\mutri$]
There is a positive constant $C$ such that
\begin{equation*}
    \sum_{i=1}^n \mutri(i) \leq - C n \quad\text{for all sufficiently large } n.
\end{equation*}
\end{conj*}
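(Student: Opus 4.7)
The plan is to establish the conjecture in two complementary steps: first, exhibit a set $\mathcal A \subseteq \NN$ of positive lower density on which $\mutri(n) = -1$; second, show that the sum of positive values of $\mutri$ on $[1,N]$ is outweighed by the contribution from $\mathcal A$. For the first step I would focus on the \emph{atoms} of $(\NN, \leqtri)$, that is, indices $n \ge 2$ such that the only $d < n$ with $d \leqtri n$ is $d = 1$. By (GM.2), every atom satisfies $\mutri(n) = -1$; equivalently, $n$ is an atom precisely when no $d$ with $1 < d < n$ has $d(d+1) \mid n(n+1)$.

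A useful explicit family of atoms is given by Sophie-Germain-type primes: if $p \ge 5$ is prime and $q = (p+1)/2$ is also prime with $q \ge 5$, then $p(p+1) = 2pq$ has divisors $\{1, 2, p, q, 2p, 2q, pq, 2pq\}$, of which only $2 = 1 \cdot 2$ and $2pq = p(p+1)$ are products of two consecutive positive integers; hence $p \in \mathcal A$. Such primes form a sparse subset of $\NN$, so I would enlarge $\mathcal A$ to include composite atoms (small examples are $n = 4, 7, 10, 16, \ldots$) for which the factorization of $n(n+1)$ admits no nontrivial consecutive-integer divisor. A sieve argument---controlling the number of $n \le N$ for which both $n$ and $n+1$ have sufficiently simple prime factorizations---should then yield the density bound $|\mathcal A \cap [1,N]| \gg N$.

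The main obstacle is the second step: bounding the positive tail $\sum_{n \le N,\, \mutri(n) > 0} \mutri(n)$. The paper itself conjectures that $\mutri$ is unbounded above, so individual positive values can be arbitrarily large, and each arises from a delicate inclusion--exclusion over chains $1 <_{\tri} d_1 <_{\tri} \cdots <_{\tri} n$. A natural analytic approach starts from the defining identity $\sum_{d \leqtri n} \mutri(d) = [n = 1]$; summing over $n \le N$ and switching order of summation yields
\[
    \sum_{d \le N} \mutri(d)\, c_N(d) = 1,
\]
where $c_N(d) := |\{n \le N : d \leqtri n\}|$. In the classical divisibility case this identity becomes $\sum_{d \le N} \mu(d) \lfloor N/d \rfloor = 1$, from which Mertens-type bounds are derived via Dirichlet series and the Euler product for $\zeta$. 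Here, however, $(\NN, \leqtri)$ has no multiplicative product structure, no Euler product is available, and $c_N(d)/N$ depends on the arithmetic of $d(d+1)$ in subtle ways. For this reason I expect the positive-tail bound to be the genuinely hard step, and a rigorous proof of the conjecture likely requires either a new structural insight into the chain combinatorics of the triangular-divisibility poset or a more robust replacement for Abel summation that handles the non-multiplicative counting function $c_N(d)$.
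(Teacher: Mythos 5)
There is nothing to compare your argument against: this statement is one of the paper's \emph{conjectures}, supported only by numerical data (the plots of partial sums and the OEIS sequences), and the paper offers no proof. So your proposal has to stand on its own, and by your own admission it does not. The decisive gap is the second step: nothing in the proposal controls the positive part $\sum_{n \le N,\ \mutri(n)>0}\mutri(n)$. Exhibiting a positive-density set $\mathcal{A}$ of atoms with $\mutri(n)=-1$ yields a negative contribution of size $\gg N$, but the paper's own data (and its further conjecture) indicate that $\mutri$ takes arbitrarily large positive values, so without an upper bound on the positive tail the partial sums could a priori oscillate or even drift upward. Your identity $\sum_{d\le N}\mutri(d)\,c_N(d)=1$, obtained by summing the defining relation and exchanging the order of summation, is correct, but as you note it cannot be unwound into a Mertens-type bound here because $(\NN,\leqtri)$ has no multiplicative structure and $c_N(d)$ is not of the form $\lfloor N/d\rfloor$ times a tame factor. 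Flagging this step as ``the genuinely hard part'' is accurate, but it means the proposal is a research program, not a proof.

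The first step is also incomplete as stated. The atom observation itself is sound: if the only $d<n$ with $d \leqtri n$ is $d=1$, then (GM.2) forces $\mutri(n)=-1$, and your Sophie-Germain-type analysis (for $p\ge 5$ prime with $q=(p+1)/2\ge 5$ prime, the only consecutive-product divisors of $2pq$ are $2$ and $2pq$) is correct, as are the examples $n=4,7,10,16$. But the infinitude of primes $p$ with $(p+1)/2$ prime is itself an open problem, so this family does not even certify infinitely many atoms, let alone positive density; everything then rests on the unexecuted ``sieve argument'' for composite atoms. Note what that sieve must do: being an atom imposes one condition for every $d\ge 2$ (already $d=2$ forces $n\equiv 1 \pmod 3$, so the atom density is at most $1/3$), and while small $d$ give congruence conditions amenable to inclusion--exclusion, the range $\sqrt{n} \lesssim d < n$ asks whether $n(n+1)$ has a divisor of the special form $d(d+1)$, which is a question about the divisor structure of $n(n+1)$ rather than a congruence condition, and is precisely where such arguments become delicate. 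So both halves of the plan remain open, as does the conjecture itself; the honest conclusion is that you have a plausible strategy and some correct structural observations (atoms, the prime family, the summed identity), but no proof.
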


In other words, this conjecture states that the average value of the \Mobius{} function $\mutri$ is eventually bounded above by $-C$, i.e.
\[
    \limsup_{n \to \infty} \frac1{n} \sum_{i = 1}^n \mutri(i) \leq -C .
\]

\subsection{Data: Partial sums of \Mobius{} value absolute values}

In Figure~\ref{fig:abs-mobius-sum} we show the partial sums of the absolute values $|\mutri(n)|$.
In this figure, the trend is even smoother than in Figure~\ref{fig:mobius-tri-sums}.
\begin{figure}[h!]
    \centering
    \includegraphics{1000_Sum_Abs_Value}
    \vspace{-0.5cm}
    \caption{Partial sums of \Mobius{} function absolute values $|\mutri(n)|$.}
    \label{fig:abs-mobius-sum}
\end{figure}

This data in Figure~\ref{fig:abs-mobius-sum} leads us to make the following conjecture.
\begin{conj*}[Partial sums of $|\mutri|$]
As $n \to \infty$,
\begin{equation*}
    \sum_{i=1}^n |\mutri(i)| = \frac12 n + o(n).
\end{equation*}
\end{conj*}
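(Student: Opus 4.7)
The plan is to classify integers $n$ by the isomorphism class of their principal order ideal $I(n) = \{m \in \NN : m \leqtri n\}$ in $(\NN, \leqtri)$, since $|\mutri(n)|$ is determined by the finite poset $I(n)$. Each condition $m \leqtri n$ is equivalent to the congruence $n(n+1) \equiv 0 \pmod{m(m+1)}$, so the event ``$I(n)$ has a prescribed finite shape $P$'' is an intersection of finitely many residue conditions on $n$; by the Chinese Remainder Theorem, each such event has a well-defined natural density $d_P$. One would then like to conclude
\[
    \lim_{N \to \infty} \frac{1}{N} \sum_{i=1}^{N} |\mutri(i)| = \sum_P d_P \, |\mu_P|,
\]
where the sum runs over isomorphism classes of finite bounded posets $P$ and $|\mu_P|$ denotes the absolute M\"obius value from the bottom to the top of $P$. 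The conjecture is then equivalent to showing that the right-hand side equals $1/2$.

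First I would analyze the dominant contribution, coming from the two-element chain, which occurs precisely when $\tri(n)$ has no nontrivial proper triangular divisor. A baseline is immediate: $\tri(2) = 3 \nmid \tri(n)$ exactly when $n \equiv 1 \pmod{3}$, contributing density $1/3$ on its own. Refining by inclusion--exclusion over successively larger candidate divisors $\tri(2), \tri(3), \tri(4), \ldots$, and separately computing the densities of the next few small interval shapes (three-element chains, ``V''- and ``$\Lambda$''-shapes, and so on) by direct case-analysis via CRT, should build an increasingly sharp approximation to the full sum.

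The main obstacle is twofold. First, the immediately preceding conjecture asserts that $\mutri$ is unbounded, so $\sum_P d_P |\mu_P|$ contains arbitrarily large summands and its convergence --- together with the uniform passage to the limit --- requires a quantitative tail bound. The elementary estimate $|\mutri(n)| \leq d(\tri(n)) \leq d(n)\,d(n+1) = O(n^\epsilon)$, combined with the fact that triangular numbers have density $O(m^{-1/2})$ among integers up to $m$, should give enough decay to justify interchanging the limit and the sum. Second, pinning down the \emph{exact} constant $1/2$ --- as opposed to some other rational --- is the most delicate step. I would look for a closed-form identity for $\sum_P d_P |\mu_P|$ by organizing the inclusion--exclusion along triangular-divisor chains of $n(n+1)$, analogous to the identity $\sum_d \mu(d)/d^2 = 1/\zeta(2)$ behind the classical squarefree density. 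Absent such an identity, a combinatorial involution matching $\{n \leq N : \mutri(n) \neq 0\}$ with a set of density $1/2$ would be the natural alternative route; this final identification is the step I expect to be the genuinely hard part.
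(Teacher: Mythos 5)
This statement is one of the paper's conjectures: the authors offer no proof, only numerical evidence (the partial sums of $|\mutri|$ up to $n=1000$ and $10{,}000$ and an empirically observed slope near $0.5$). So your proposal cannot be compared against a proof in the paper; it has to stand on its own, and as written it does not close the statement. The decisive gap is one you name yourself: the entire content of the conjecture is the constant $\frac12$, and your plan ends with ``I would look for a closed-form identity \ldots{} absent such an identity, a combinatorial involution \ldots{} is the step I expect to be the genuinely hard part.'' That is a research program, not a proof; nothing in the proposal produces the value $\frac12$ rather than some other constant, or even shows that the limiting average exists.

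Two intermediate steps are also shakier than you present them. First, the event ``the principal order ideal $I(n)$ has isomorphism type $P$'' is \emph{not} a finite intersection of residue conditions on $n$: to pin down the type you must also exclude $m \leqtri n$ for every $m$ outside the prescribed ideal, and the number of potential divisors $m \leq n$ grows with $n$. So $d_P$ is not given by a single CRT computation; its existence needs a sieve argument with a controlled tail (already for the two-element chain, the density of $n$ with no nontrivial triangular divisor of $\tri(n)$ is an infinite inclusion--exclusion whose convergence must be justified). Second, the bound $|\mutri(n)| \leq d(\tri(n))$ is asserted without argument. The M\"obius function of a poset interval is, by Philip Hall's theorem, an alternating sum of chain counts, and is not in general bounded by the number of elements of the interval; here the interval $\{m : \tri(m) \mid \tri(n)\}$ is not the full divisor lattice of $\tri(n)$, so no classical divisor-lattice formula applies, and indeed the paper conjectures (with data in Table~\ref{table:mobius-tri-large}) that $|\mutri|$ is unbounded. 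Some polynomial or $O(n^\epsilon)$ bound may well hold, but it requires an actual argument before you can interchange the limit with the sum over shapes. In short: the framework (classify $n$ by the shape of its ideal, compute shape densities, sum $d_P\,|\mu_P|$) is a reasonable attack on the conjecture, but the density existence, the tail bound, and above all the evaluation at $\frac12$ are all missing, so the statement remains exactly as open as the paper leaves it.
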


\subsubsection{Average magnitude of \Mobius{} values}

The conjecture can be rephrased in terms of the average magnitute of the the \Mobius{} function values.
Namely, that
\[
    \lim_{n \to \infty} \frac{1}{n} \sum_{i = 1}^n |\mutri(i)| = \frac12 .
\]

\subsection{\Mobius{} values with large magnitude}

We observe empirically that the values of the \Mobius{} function $\mutri$ seem to achieve arbitrarily large magnitude.
This is in contrast with the classical \Mobius{} function $\mu$, which only has values in $\{-1, 0, 1\}$.

\begin{table}[h]
    \centering
    \begin{tabular}{||c | c||} 
    \hline
    $M$ & first $n$ such that $|\mutri(n)| \geq M$ \\ [0.5ex] 
    \hline\hline
    1 & 1  \\ 
    2 & 44  \\
    3 & 272 \\
    4 & 1274 \\
    5 & 2639 \\
    6 & 6720 \\
    7 & 3024 \\
    8 & 2079 \\ [1ex] 
    \hline
    \end{tabular}
    \vspace{0.5cm}
    \caption{Inputs of the \Mobius{} function $\mutri$ with increasing magnitude.}
    \label{table:mobius-tri-large}
\end{table}

This data leads us to make the following conjecture.
\begin{conj}
    For any positive integer $M$, there is a positive integer $n$ such that ${|\mutri(n)| \geq M}$.
\end{conj}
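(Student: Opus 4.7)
The plan is to construct an infinite sequence $n_1 < n_2 < \cdots$ for which $|\mutri(n_k)| \to \infty$. The starting tool is Philip Hall's chain formula for Möbius functions of locally finite posets, which applied to $(\NN, \leqtri)$ gives
\[
    \mutri(n) = \sum_{k \geq 0} (-1)^k \, c_k(n),
\]
where $c_k(n)$ counts chains $1 = d_0 <_{\tri} d_1 <_{\tri} \cdots <_{\tri} d_k = n$. So it suffices to exhibit integers $n$ for which this alternating sum is arbitrarily large in absolute value.

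A first strategy is to arithmetically engineer $n$ so that the down-set $\{d : d \leqtri n\}$ has a controlled and rich structure. Since $d \leqtri n$ is equivalent to $d(d+1) \mid n(n+1)$, one would seek $n$ for which $n(n+1)$ admits many factorizations of the form $d(d+1) \cdot e$; natural candidate families arise from Pell-type identities, or from choosing $n$ so that $n(n+1)$ is simultaneously divisible by many products $d_i(d_i+1)$. A complementary, more structural strategy would be to realize a given finite poset $P$ as an interval $[a, n]$ inside $(\NN, \leqtri)$: if $P$ can be chosen with a Möbius value of arbitrarily large magnitude (such as a partition lattice $\Pi_k$, where $|\mu(\hat 0, \hat 1)| = (k-1)!$), and if the interval $[1, a]$ can be arranged to be trivial enough, one could hope to transfer the growth of $|\mu_P|$ to $|\mutri(n)|$ via interval decomposition.

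The main obstacle is the sign cancellation in the chain sum. For the classical integer divisibility poset the cancellation is so exact that $|\mu| \leq 1$; any proof for $\mutri$ must therefore exploit a feature of the map $n \mapsto n(n+1)$ that breaks this symmetry. The natural candidate feature is that $\gcd(n, n+1) = 1$, which forces the two factors of $n(n+1)$ to distribute their prime-power divisors independently across any factorization $d(d+1) \mid n(n+1)$, producing a combinatorially more flexible down-set than the ordinary divisor lattice of $n(n+1)$. Turning this qualitative flexibility into a quantitative lower bound on $|\mutri(n)|$ — ideally one consistent with the empirical jumps recorded in Table~\ref{table:mobius-tri-large} — appears to be the crux, and is where genuinely new arithmetic input (rather than purely poset-theoretic formalism) would be required.
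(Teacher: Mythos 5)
This statement is an open conjecture in the paper, supported only by the empirical data in Table~\ref{table:mobius-tri-large}; the paper offers no proof, so there is nothing for your argument to be measured against except its own completeness --- and as written it is not a proof but a research plan, as you yourself concede in the final sentence. The reduction via Hall's chain formula, $\mutri(n) = \sum_{k\geq 0} (-1)^k c_k(n)$ with $c_k(n)$ counting chains from $1$ to $n$ in $(\NN, \leqtri)$, is correct but by itself carries no information: exactly the same identity holds in the classical divisor poset, where total cancellation forces $|\mu|\leq 1$. Everything therefore hinges on the step you leave open, namely producing an explicit family of integers $n$ together with a provable lower bound on the imbalance of the alternating chain counts, and no candidate family is actually constructed --- ``Pell-type identities'' and ``$n(n+1)$ divisible by many products $d_i(d_i+1)$'' are named but not instantiated, and no single value of $n$ with $|\mutri(n)|\geq 2$ is certified by the argument.

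The structural alternative has the same gap in a different place. To transfer a large M\"obius value from a finite poset $P$ (e.g.\ a partition lattice) to $\mutri(n)$ you would need to show (i) that $P$ actually occurs as the interval $[a,n]$ in $(\NN,\leqtri)$ for suitable $a, n$, which requires solving a system of divisibility conditions on $d(d+1) \mid n(n+1)$ that you do not address, and (ii) that the value $\mu(a,n)$ of that interval controls $\mutri(n)=\mu(1,n)$, which is false in general: the M\"obius function of $[1,n]$ does not factor through $[1,a]$ and $[a,n]$ unless the interval decomposes as a direct product, and ``the interval $[1,a]$ can be arranged to be trivial enough'' is not a condition you make precise or verify. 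Your closing observation --- that $\gcd(n,n+1)=1$ makes the down-set of $n$ richer than a divisor lattice and should break the classical cancellation --- is a reasonable heuristic for why the conjecture ought to be true, but it is a heuristic, not an argument. In its current form the proposal restates the conjecture in poset-theoretic language and correctly identifies where the difficulty lies, without closing it.
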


\subsection{Two-variable \Mobius{} values}
In Appendix~\ref{sec:mobius-data} 
we show a heatmap illustrating the values of the two-variable \Mobius{} function for $(\NN, \leqtri)$.
This should allow further explorations for patterns in the \Mobius{} values in future work.



\section{Further questions}

\begin{itemize}
    \item 
    If Conjecture~\ref{conj:mobius-sum} holds, what are bounds on the constant $C$?
    Can the exact value of $C$ be computed?
    
    \item 
    Other statistics to measure on $\mutri$?
    
    \item 
    Similar to Conjecture ~\ref{conj:asymptote}, is there an asymptotic relation of the form 
    \begin{equation}
    \lim_{n \to \infty} \sum^n_{i=1} \frac{\mutri (i)}{\tri(i)} \approx D,
    \end{equation}
    where $D$ is some constant?
\end{itemize}

\begin{conj}
\label{conj:asymptote}
There is a positive constant $E$ such that
\begin{equation}
    \lim_{n\to\infty} \sum_{i=1}^n \frac{\mutri(i)}{i} = - E .
\end{equation}
\end{conj}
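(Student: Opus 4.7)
The plan is to reduce the conjecture, via Abel summation, to finer information about the partial sums $S(n) := \sum_{i=1}^n \mutri(i)$ that appear in Conjecture~\ref{conj:mobius-sum}. Writing $T(n) = \sum_{i=1}^n \mutri(i)/i$, the summation-by-parts identity gives
\[
T(n) = \frac{S(n)}{n} + \sum_{i=1}^{n-1} \frac{S(i)}{i(i+1)},
\]
so $T(n)$ converges to a finite limit $-E$ if and only if $S(n)/n$ tends to a limit and the series $\sum_i S(i)/(i(i+1))$ converges. This reformulation turns the problem into a question about the sequence $S(n)$, whose behavior we already want to understand for Conjecture~\ref{conj:mobius-sum}.

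Before attempting a full proof I would first check the internal consistency of the two conjectures. The inequality $S(n) \leq -Cn$ of Conjecture~\ref{conj:mobius-sum} gives $S(i)/(i(i+1)) \leq -C/(i+1)$ for large $i$, which forces $T(n) \to -\infty$ rather than converging. The two conjectures are therefore in apparent tension, and the first task is to reconcile them by extending the numerical experiments substantially past $n = 10{,}000$: either $S(n)/n$ in fact tends to $0$ (in which case Conjecture~\ref{conj:mobius-sum} must be weakened to a purely sign-based statement) or $T(n)$ actually diverges (in which case the present conjecture should be revised, for instance to $T(n) = -E \log n + O(1)$). Assuming $S(n) = o(n)$ turns out to be the correct asymptotic, the next step is to obtain a polynomial savings bound $S(n) = O(n^\alpha)$ with $\alpha < 1$; this yields absolute convergence of the tail series in the Abel identity and hence existence of $\lim T(n)$, while negativity of the limit would follow from the observed persistent negativity of $T(n)$ together with a sign-persistence argument ruling out large positive excursions.

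The main obstacle is the absence of a multiplicative or generating-function framework for $\mutri$ analogous to the classical identity $\sum \mu(n)/n^s = 1/\zeta(s)$, whose analytic continuation to $s=1$ is precisely what underlies $\sum \mu(n)/n = 0$. The poset $(\NN, \leqtri)$ lacks the product structure of the usual divisibility lattice: as is visible in the Hasse diagram (Figure~\ref{fig:hasse-triangle}) and the matrix \eqref{eq:zeta-10}, the relation $\leqtri$ behaves quite irregularly. Any proof strategy will therefore need to exploit the divisibility patterns of Section~\ref{sec:triangular-patterns}, perhaps by producing a direct savings bound on $|\mutri(n)|$ on average, or by leveraging an explicit description of covering relations in the Hasse diagram to express $\mutri(n)$ as a signed sum over chains whose lengths and multiplicities can be controlled arithmetically.
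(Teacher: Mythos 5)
The statement you were asked to prove is one of the paper's conjectures: the paper offers no proof of it at all, only numerical evidence (the plot of the partial sums $\sum_{i \le n} \mutri(i)/i$ up to $n = 10{,}000$, which levels off near $-0.239$). So there is no proof of record to compare against, and your proposal is, by its own admission, a plan rather than a proof: the existence of the limit is left resting on unproved hypotheses --- that $S(n) = \sum_{i\le n}\mutri(i)$ satisfies $S(n) = o(n)$, or better $S(n) = O(n^\alpha)$ with $\alpha < 1$, plus a ``sign-persistence'' argument for negativity of the limit --- none of which you establish, and for which, as you note, no multiplicative or generating-function machinery analogous to $\sum \mu(n) n^{-s} = 1/\zeta(s)$ is available for $(\NN, \leqtri)$. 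As a proof of the conjecture the proposal therefore has a genuine gap: essentially all of the analytic content is still missing.

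That said, the reduction you do carry out is correct and is genuinely useful feedback on the paper. The summation-by-parts identity $T(n) = S(n)/n + \sum_{i=1}^{n-1} S(i)/\bigl(i(i+1)\bigr)$ is right, and your observation that Conjecture~\ref{conj:mobius-sum} is incompatible with Conjecture~\ref{conj:asymptote} as stated is a real internal inconsistency: if $S(i) \le -Ci$ for all sufficiently large $i$, then the tail series is eventually dominated termwise by $-C/(i+1)$, so $T(n) \to -\infty$ and the limit in Conjecture~\ref{conj:asymptote} cannot be a finite constant $-E$. The paper's data do not resolve this, since $S(n) \approx -Cn$ with small $C$ would make $T(n)$ decay like $-C\log n$, slowly enough to masquerade as convergence at $n = 10^4$; your suggestion to push the computations further and to decide between weakening Conjecture~\ref{conj:mobius-sum} to $S(n) = o(n)$ (with persistent negativity) or revising Conjecture~\ref{conj:asymptote} to a logarithmic law is exactly the right next step. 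But identifying the tension and outlining a program does not prove the statement, and neither does the paper.
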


This constant $-E$, can be seen to be around approximately $-0.239$:

\begin{figure}[h!]
    \centering
    \includegraphics[width=4in]{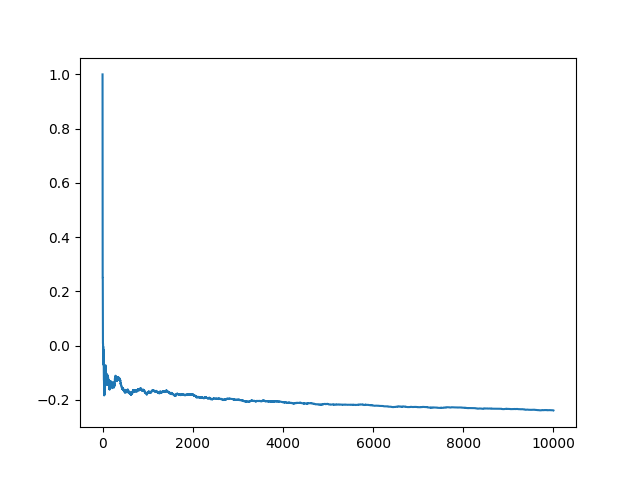}
    \vspace{-0.5cm}
    \caption{Partial sums of $\mutri(n)/n$ up to $10,000$.}
    \label{fig:enter-label}
\end{figure}

\newpage


%
%
\bibliography{triangle-mobius-ref}
\bibliographystyle{abbrv}

\newpage
%
%
\appendix
\section{\Mobius{} matrices}\label{sec:mobius-data}

In this appendix, we visualize the values of the two-variable \Mobius{} function of $(\NN, \leqtri)$.
A heatmap showing the values of the \Mobius{} matrix is given in Figure~\ref{fig:mobius-heatmap-tri}.
Positive values are indicated by blue and negative values are indicated by red; zero values are light gray.



\begin{figure}[h!]
    \centering
    \includegraphics[width= 6in]{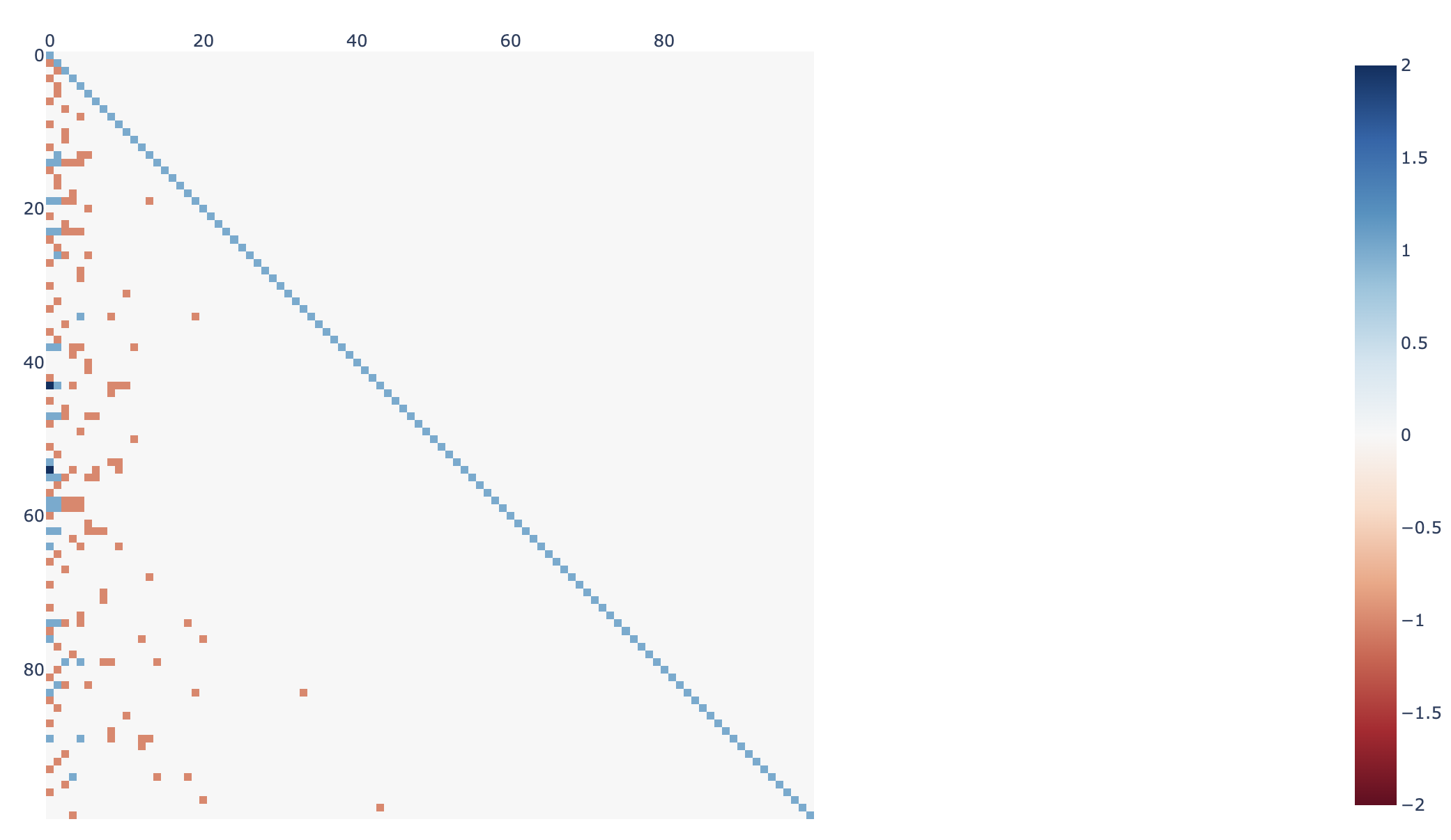}
    \caption{Two-variable \Mobius{} function for $(\NN, \leqtri)$ for $1 \leq m, n \leq 100$.}
    \label{fig:mobius-heatmap-tri}
\end{figure}

\begin{figure}[h!]
    \centering
    \includegraphics[width = 3in]{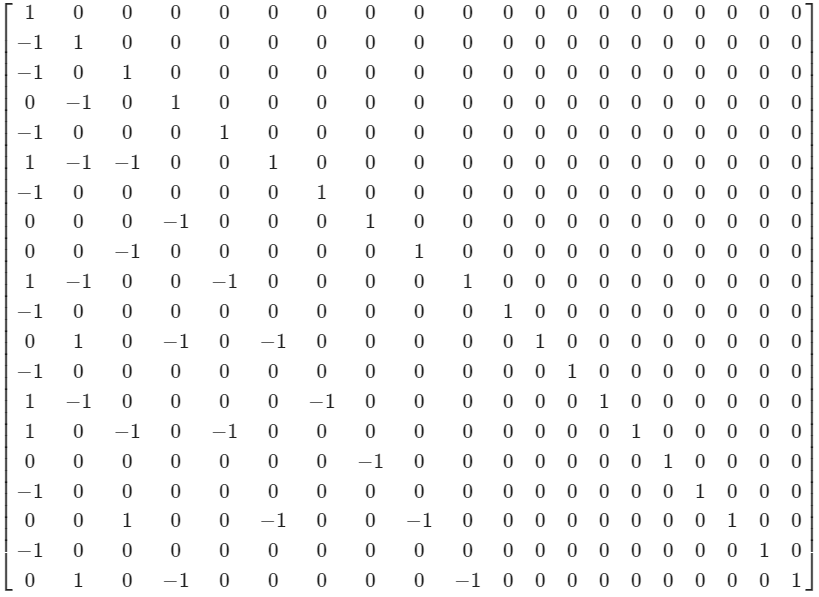}
    \caption{\Mobius{} matrix of 20 Integers}
    \label{fig:mobius-matrix-classical-20}
\end{figure}

For comparison, Figure~\ref{fig:mobius-heatmap-classical} shows the values of the classical \Mobius{} function.
\begin{figure}[h!]
    \centering
    \includegraphics[width = 6in]{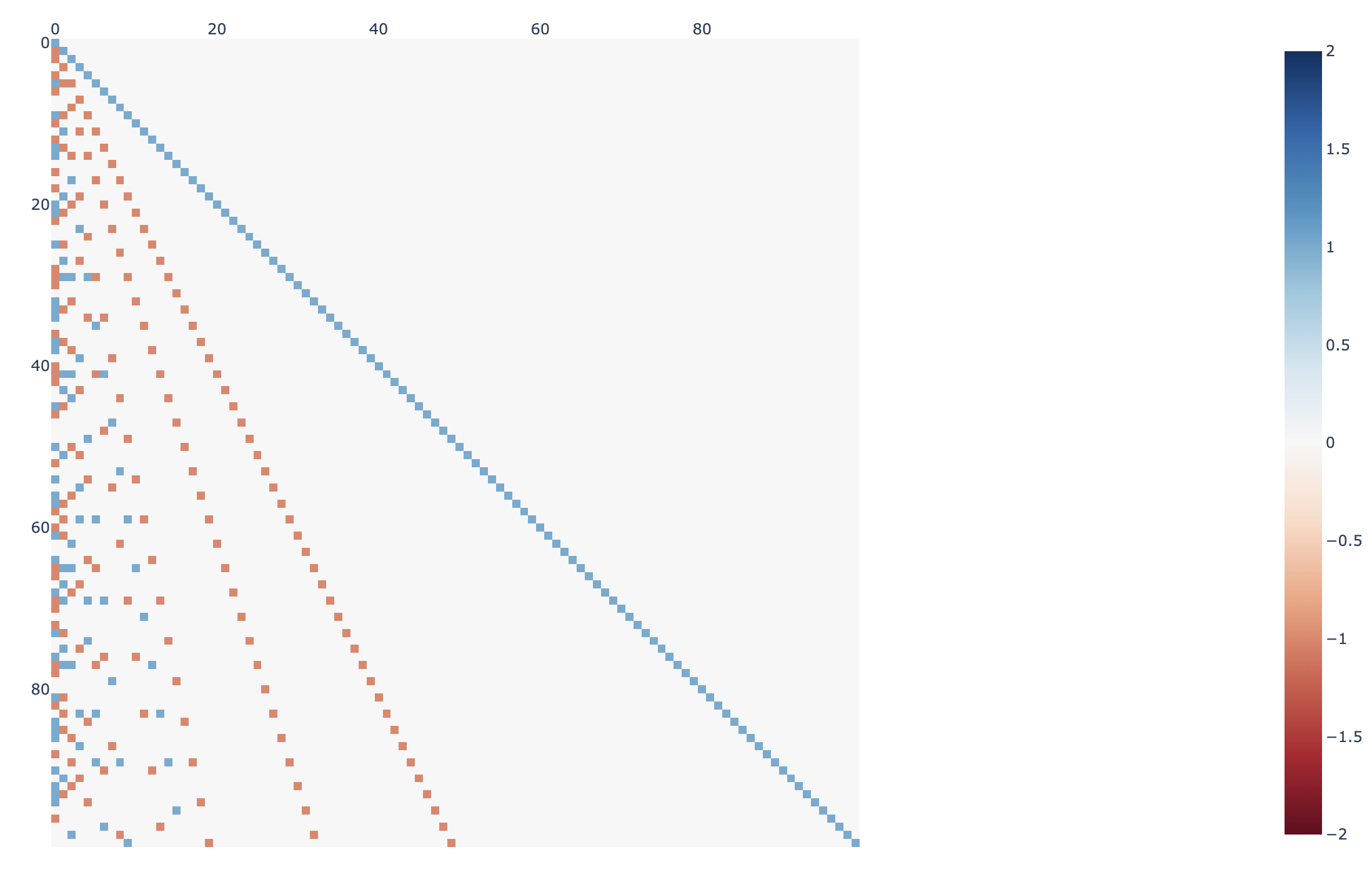}
    \caption{Classical two-variable \Mobius{} function for $1 \leq m, n \leq 100$.}
    \label{fig:mobius-heatmap-classical}
\end{figure}

\begin{figure}[h!]
    \centering
    \includegraphics[width = 3in]{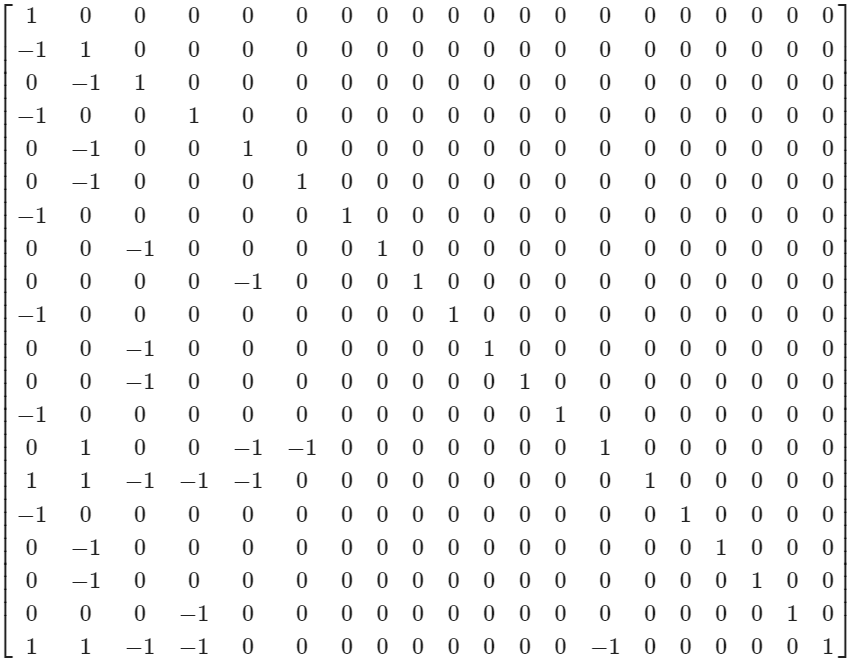}
    \caption{\Mobius{} matrix of 20 Triangular Numbers}
    \label{fig:mobius-matrix-20}
\end{figure}

\begin{figure}[h!]
    \centering
    \includegraphics[scale=0.8]{10000_Sum_Function}
    \caption{Sums of \Mobius{} values $\mutri(n)$ up to 10,000.}
    \label{fig:mobius-sum-10000}
\end{figure}

\newpage
\section{Code}

In this section, we include the 

The first is for generating the zeta matrix.
Note: It was slightly faster and easier to generate matrices by dividing column by the rows, so at the end we transposed the matrix to get the desired result. 
\begin{mdframed}[
backgroundcolor=light-gray, 
linecolor=light-gray,
roundcorner=20pt,
leftmargin=10, 
rightmargin=1, 
innerleftmargin=15, 
innertopmargin=5,
innerbottommargin=5, 
outerlinewidth=1
]
\begin{lstlisting}
from sympy import *
import matplotlib.pyplot as plt
import numpy as np

def triangular_numbers(n):
    # returns a list of triangular numbers
    return [(x * (x + 1) // 2) for x in range(1,n+1)]
    
def zeta_matrix(n):
    # returns zeta matrix for partial order on first n triangular numbers
    lst = triangular_numbers(n)
    zeta_array = [[0 if a % b != 0 else 1 for a in lst] for b in lst]
    Z = Matrix(zeta_array).T
    return Z

# to create zeta matrix for first 100 triangular numbers:
zeta_matrix(100)
\end{lstlisting}
\end{mdframed}

\bigskip
The second is for generating the \Mobius{} matrix.
\begin{mdframed}[backgroundcolor=light-gray, roundcorner=10pt,leftmargin=1, rightmargin=1, innerleftmargin=15, innertopmargin=10,innerbottommargin=10, outerlinewidth=1, linecolor=light-gray]
\begin{lstlisting}
def mobius_matrix(a):
    Z = Zeta_Matrix(a)
    M = Z ** -1
    return M

def plot_mobius_values(n):
    M = mobius_matrix(n)
    a = M[0, :n].tolist() 
    plt.plot(a)
    plt.legend()
    plt.show()
\end{lstlisting}
\end{mdframed}

The third is for generating the sum of the mobius values:
\begin{mdframed}[backgroundcolor=light-gray, roundcorner=10pt,leftmargin=1, rightmargin=1,innerleftmargin=15,innertopmargin=15,innerbottommargin=15, outerlinewidth=1,linecolor=light-gray]
\begin{lstlisting}
number = int(input("Number: "))

M = mobius_matrix(triangular_numbers(number))
N = M[0, :].tolist()

def sum_function(lst):
    sum_list = [sum(lst[:i+1]) for i in range(len(lst))]
    return sum_list

S = sum_function(N[0])
plt.plot(S)
plt.show()
\end{lstlisting}
\end{mdframed}

\bigskip
The fourth is for generating the absolute mobius value sums.
\begin{mdframed}[backgroundcolor=light-gray, roundcorner=10pt,leftmargin=1, rightmargin=1,innerleftmargin=15, innertopmargin=15,innerbottommargin=15, outerlinewidth=1,linecolor=light-gray]
\begin{lstlisting}
number = int(input("Number: "))

M = mobius_matrix(triangular_numbers(number))
N = M[0, :].tolist()

S = sum_function(abs_value(N[0]))
plt.plot(S)
plt.show()
#slope approaching 0.5
\end{lstlisting}
\end{mdframed}

The slope was found using the equation
\begin{equation}
    \frac{y_2 - y_1}{x_2 - x_1}
\end{equation}
where the change in $y$ was the difference between the last M\"{o}bius value and the first one, and the change in $x$ was just the number input minus $0$.

The fifth is for creating heatmaps for visualizing the \Mobius{} matrix, which are generated using the Plotly package~\cite{plotly}.

\begin{mdframed}[backgroundcolor=light-gray, roundcorner=10pt,leftmargin=1, rightmargin=1, innerleftmargin=15, innertopmargin=15,innerbottommargin=15, outerlinewidth=1, linecolor=light-gray]
\begin{lstlisting}
import plotly.express as px
from mobius_matrix import mobius_matrix, triangular_numbers

def plot_mobius_values():
    M = mobius_matrix(triangular_numbers())
    M = M.transpose()
    fig = px.imshow(M, color_continuous_scale='RdBu', color_continuous_midpoint=0.0)
    fig.update_layout(
    xaxis={'side': 'top'})
    fig.show()

plot_mobius_values()
\end{lstlisting}
\end{mdframed}

The sixth is for the partial sums:

\begin{mdframed}[backgroundcolor=light-gray, roundcorner=10pt,leftmargin=1, rightmargin=1, innerleftmargin=15, innertopmargin=15,innerbottommargin=15, outerlinewidth=1, linecolor=light-gray]
    \begin{lstlisting}
        number = int(input("Number: "))

M = mobius_matrix(triangular_numbers(number))
N = M[0, :].tolist()

def partial_sums(lst):
    result = [sum(lst[:i+1]) for i in range(len(lst))]
    return result

n_divided = [N[i]/(i+1) for i in range(len(N))]

S = partial_sums(n_divided)

print(min(S))


plt.plot(S)
plt.show()
    \end{lstlisting}
\end{mdframed}

\section{Triangular number divisibility patterns}
\label{sec:triangular-patterns}

In this section, we investigate some properties of when one triangular number divides another.
The results of this section are independent of the rest of the paper.

\label{sec:zeta-patterns}
Let 
$\tri(n) = \frac12 n(n+1)$
denote the $n$-th triangular number.
Recall that the partial order $(\NN, \leqtri)$ records, for each pair of positive integers $i,j$, whether or not $\tri(i) \,|\, \tri(j)$ holds.
The following statements may be useful for studying asymptotics of \Mobius{} values $\mutri(m,n)$ as $n\to \infty$, for fixed $m\geq 2$.

\begin{prop}
For any $n$, $\tri(n)$ divides $\tri(n(n+1))$.
\end{prop}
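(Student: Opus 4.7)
The plan is to prove this by a direct computation of the quotient $\tri(n(n+1))/\tri(n)$. Since we know $\tri(k) = \frac{1}{2}k(k+1)$ for every positive integer $k$, the strategy is simply to substitute $k = n(n+1)$ into this formula, divide by $\tri(n)$, and verify that the result is a positive integer.

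In detail, I would first write
\[
    \tri(n(n+1)) = \frac{n(n+1)\bigl(n(n+1)+1\bigr)}{2}.
\]
Then I would divide by $\tri(n) = \frac{n(n+1)}{2}$. The factor of $\frac{n(n+1)}{2}$ cancels cleanly, and the quotient becomes
\[
    \frac{\tri(n(n+1))}{\tri(n)} = n(n+1) + 1 = n^2 + n + 1,
\]
which is manifestly a positive integer for every $n \geq 1$. This exhibits the required integer quotient and completes the argument.

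There is no real obstacle here: the identity $\tri(n(n+1))/\tri(n) = n^2 + n + 1$ is an algebraic identity, and the proposition follows immediately. The only thing worth noting is that the computation also records the explicit value of the quotient, which may be useful later when analyzing $\mutri(n, n(n+1))$ or other related M\"obius values.
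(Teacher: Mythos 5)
Your proof is correct and follows essentially the same route as the paper: compute $\tri(n(n+1)) = \frac{n(n+1)(n(n+1)+1)}{2} = \tri(n)\,(n(n+1)+1)$, so the quotient is the integer $n(n+1)+1 = n^2+n+1$. No gaps to report.
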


\begin{proof}
We can calculate directly
\[
    \tri({n(n+1)}) = \frac{n(n+1)(n(n+1)+1)}{2} = \tri(n) (n(n+1) + 1).
\]
Therefore the ratio $\tri(n(n+1)) / \tri(n)$ simplifies to
\[
    \frac{\tri(n(n+1))}{\tri(n)} = n(n+1) + 1 
\]
which is an integer for any $n$.
\end{proof}

\begin{prop}
\label{prop:2}
For any $n$, $\tri(n)$ divides $\tri( \frac12 n(n+1) )$ if and only if 
$n \equiv 1$ or $2$ mod 4.
\end{prop}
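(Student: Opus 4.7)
The plan is to reduce the divisibility statement to a parity condition on $\tri(n)$, and then analyze that parity by checking $n$ modulo $4$. Set $T = \tri(n) = \tfrac12 n(n+1)$. Then $\tri(T) = \tfrac12 T(T+1)$, so the ratio simplifies:
\begin{equation*}
    \frac{\tri(T)}{\tri(n)} = \frac{T(T+1)/2}{T} = \frac{T+1}{2}.
\end{equation*}
Hence $\tri(n)$ divides $\tri(\tfrac12 n(n+1))$ if and only if $T+1$ is even, i.e.\ if and only if the triangular number $\tri(n)$ itself is odd.

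Next, I would determine the parity of $\tri(n) = \tfrac{n(n+1)}{2}$ by writing $n = 4k+r$ for $r \in \{0,1,2,3\}$ and examining each residue class. In the cases $r = 1$ and $r = 2$, exactly one factor of $2$ appears in $n(n+1)$, so dividing by $2$ leaves a product of two odd integers and $\tri(n)$ is odd. In the cases $r = 0$ and $r = 3$, one of $n, n+1$ is divisible by $4$, so $\tri(n)$ retains a factor of $2$ and is even. This is the entire content of the proposition once combined with the reduction above.

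No step presents a real obstacle; the computation is routine. The only thing worth flagging is the clean manipulation in the first display, which collapses the question to a pure parity statement and makes the mod-$4$ case analysis the whole story. I would present the proof as a single short paragraph: state the ratio identity, observe equivalence with $\tri(n)$ being odd, and conclude with the four-case check on $n \bmod 4$.
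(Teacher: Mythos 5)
Your proof is correct and follows essentially the same route as the paper: compute the ratio $\tri(\tfrac12 n(n+1))/\tri(n)$ and reduce the divisibility to a congruence settled by checking $n$ modulo $4$. Your intermediate reformulation (``$\tri(n)$ is odd'') is just a repackaging of the paper's condition $n(n+1) \equiv 2 \pmod 4$, so the two arguments are the same in substance.
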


\begin{proof}
We have
\[
    \tri\left( \frac{n(n+1)}{2}\right) = \frac{1}{2} \left(\frac{n(n+1)}{2}\right) \cdot \left(\frac{n(n+1)}{2} + 1\right) 
    = \tri(n) \cdot \frac{n(n+1) + 2}{4}.
\]
So, for the proposition it suffices to prove that  $\frac{1}{4}(n(n+1) + 2)$ is an integer exactly when $n \equiv 1$ or $2$ modulo $ 4$.
So, we have to find all $n$ such that $n(n+1) + 2 = n^2 + n + 2$ is congruent to 0 modulo 4, i.e.
\begin{equation}
    \label{eq:1} n(n+1) \equiv 2 \quad(\mathrm{mod}\quad 4)
\end{equation}

So, in the set of residues modulo $4$, what values of $n$ will satisfy equation \eqref{eq:1}? 

\begin{align*}
   n = 0 &\qquad\Rightarrow\qquad  n(n+1) = 0 \not\equiv 2 \quad(\mathrm{mod}\quad 4) \\
    n = 1 &\qquad\Rightarrow\qquad n(n+1) = 2 \equiv 2 \quad(\mathrm{mod}\quad 4) \\
    n = 2 &\qquad\Rightarrow\qquad n(n+1) = 6 \equiv 2 \quad(\mathrm{mod}\quad 4) \\
    n = 3 &\qquad\Rightarrow\qquad n(n+1) = 12 \not\equiv 2 \quad(\mathrm{mod}\quad 4)
\end{align*}
Therefore, $n(n+1) \equiv 2 \mod 4$ if and only if $n \equiv 1$ or $2 \mod 4$,
as desired.
%
\end{proof}

\end{document}